\newcommand{\nx}{\ensuremath{\mathrm{nx}}\xspace}
\newcommand{\Mod}[1]{\ (\mathrm{mod}\ #1)}
\theoremstyle{plain}
\newtheorem*{theorem}{Theorem}
\newtheorem*{corollary}{Corollary}
\theoremstyle{definition}
\newtheorem*{definition}{Definition}
\title[A simple and elementary proof of Whitney's unique embedding theorem]{A simple and elementary proof of Whitney's unique embedding theorem}
\author[G. Brinkmann]{Gunnar Brinkmann}
\email[G. Brinkmann]{Gunnar.Brinkmann@UGent.be}
\address [G. Brinkmann]{Applied Mathematics, Computer Science and Statistics\\
Ghent University}
\begin{document}
\begin{abstract}
  In this note we give a short and elementary proof of a more general
  version of Whitney's theorem that 3-connected planar graphs have a
  unique embedding in the plane. A consequence of the theorem is also
  that cubic plane graphs cannot be embedded in a higher genus with a
  simple dual. The aim of this paper is to promote a simple and
  elementary proof, which is especially well suited
  for lectures presenting Whitney's theorem.
  
\end{abstract}
\keywords{polyhedra, graph, embedding}
\subjclass[2010]{05C10; 57M60, 57M15}
\maketitle

\section{Introduction}

We will describe the proof in the language of combinatorial embeddings in orientable surfaces. For the translation to
the language of topological 2-cell embeddings, methods from standard books like \cite{top_graph_theory} or \cite{graphs_on_surfaces}
can be used.

We interpret each edge $\{u,v\}$ of an undirected embedded graph $G$ as two directed edges:
$e=(u,v)$ and its inverse $e^{-1}=(v,u)$.
An embedded graph is a graph where for every vertex $u$ there is a cyclic order of all edges $(u,.)$, which we interpret as clockwise.
We write $\nx(e)$ for the next edge in the order around the starting point of a directed edge $e$. The inverse graph or
mirror image is the graph $G^{-1}$ with all cyclic orders reversed.

A face in an embedded graph $G$ is a directed cyclic walk $e_0, \dots ,e_{n-1}$, so that for $0\le i <n$ we have that
$\nx(e_i^{-1})=e_{(i+1) \Mod{n}}$.
We say that the set $\{e, \nx(e)\}$ forms an {\em angle} of $G$ and $G^{-1}$ if one of them has a face
containing $e^{-1}, \nx(e)$ as a subsequence. In this case the other has a face containing $\nx(e)^{-1},e$.
If a face is a simple cyclic walk, we call the corresponding undirected cycle also a (simple) facial cycle.
We consider an embedded graph
$G$ and its mirror image $G^{-1}$ as equivalent, as the faces
have the same sets of underlying undirected edges.
The genus of an embedded graph
can be computed by the Euler formula using the number $v$ of vertices, $e$ of (undirected) edges, and $f$ of faces
as $\gamma(G)=\frac{2-(v-e+f)}{2}$.
We refer to a (not necessarily embedded) graph that can be embedded with genus 0 as planar and to a
an embedded graph with genus 0 as plane.

With this notation and concept of equivalence Whitney's famous theorem \cite{Whitney_unique_embed} can be shortly stated as:\\
{\em A 3-connected planar graph has an -- up to equivalence --  unique embedding in the plane.}

We will prove a stronger theorem using the concept of {\em polyhedral embedding} that requires
some important properties of polyhedra -- that is plane 3-connected graphs -- but allows higher
genera. It is an easy consequence of the Jordan Curve Theorem that polyhedra are polyhedral
embeddings.

\begin{definition}
A polyhedral embedding of a graph $G=(V,E)$ in an orientable surface is an embedding so that
each facial walk is a simple cycle and the intersection of any two faces is either empty, a single vertex
or a single edge.

For cubic embedded graphs this is equivalent to the dual graph being simple.
\end{definition}

The argument of crossing Jordan curves that we will use in the proof
was first published by Thomassen in \cite{Thomassen_90}, but also
known to Robertson and later used by Mohar and Robertson in
\cite{MR96}. See also Theorem~5.7.1 in \cite{graphs_on_surfaces}. In fact, in \cite{Thomassen_90} the
argument was used to prove that 3-connected planar graphs embedded
with genus $g>0$ have facewidth at most 2. Together with Whitney's theorem, this implies Theorem~\ref{thm}.
We will give every detail of the proof in order to make it easily accessible also for students, but the arguments
are exactly the same arguments of crossing Jordan curves that Thomassen used -- only that here the planar case,
that is: Whitney's theorem -- is included too.

\begin{theorem}\label{thm}

  A 3-connected planar graph has an -- up to equivalence -- unique polyhedral embedding.

 \end{theorem}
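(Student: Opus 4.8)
\emph{Plan of proof.} The plan is to fix, once and for all, a plane embedding $\Pi_0$ of $G$ (which exists because $G$ is planar) and to show that every polyhedral embedding $\Pi$ of $G$ in an orientable surface $S$ is equivalent to $\Pi_0$; in particular this will force $S$ to have genus $0$. I would split the argument into two largely independent halves: a \emph{local} half, showing that for a $3$-connected graph the rotation system is determined, up to simultaneously reversing all rotations, by the mere set of facial cycles; and a \emph{global} half --- the crossing-Jordan-curves argument --- showing that $\Pi$ and $\Pi_0$ have the same set of facial cycles, after which Euler's formula pins down the genus.

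For the local half, let $v$ be a vertex of $G$. By the polyhedral hypothesis every face of $\Pi$ is a simple cycle, so a face through $v$ passes through $v$ exactly once and meets $v$ in exactly two edges, which thus form an angle at $v$; conversely each edge at $v$ lies in exactly two faces. Hence the faces through $v$ impose a $2$-regular multigraph on the set of edges incident to $v$ --- that is, a disjoint union of cycles --- and this structure can be read off from the set of facial cycles alone. I would then use $3$-connectivity to show this union is a single cycle: a splitting of the edges at $v$ into two nonempty classes respected by every face would produce a $2$-vertex cut of $G$. A single such cycle is precisely a rotation at $v$ up to reversal, and since reversing the rotations on a proper nonempty set of vertices must change some angle, connectivity propagates the choice of orientation globally. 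Consequently any two polyhedral embeddings of $G$ with the same set of facial cycles are equivalent, and if one of them is $\Pi_0$ then both are plane.

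For the global half --- the heart of the matter --- I would first observe that in any polyhedral embedding a facial cycle $C$ is \emph{induced}: a chord $xy$ of $C$ would lie in some face $F'$, and then $F'$ would meet the face $F$ bounded by $C$ in the two non-adjacent vertices $x$ and $y$, contradicting the polyhedral condition. Now take a facial cycle $C=\partial F$ of $\Pi$; since $C$ bounds the disk $F$, the curve $C$ is contractible in $S$ and separates $S$ into $F$ and a subsurface $S_0$ with $G\subseteq S_0$. Suppose $C$ is \emph{not} facial in $\Pi_0$. Then both open regions cut off by the Jordan curve $C$ in the sphere contain vertices of $G$, and $3$-connectivity supplies a path $P$ running through the inside region and a path $Q$ running through the outside region, each internally disjoint from $C$, disjoint from each other, and with endpoints on $C$ that \emph{interleave} along $C$. (Getting interleaved such paths is the standard ``bridges'' consequence of $3$-connectivity; the triangle case $|C|=3$, where a non-facial $C$ is a $3$-cut, needs a separate short argument.) Transported into $\Pi$, both $P$ and $Q$ lie in $S_0$; closing each one up by a chord of the disk $F$ joining its two endpoints produces two closed curves in $S$ whose \emph{only} common point is the single transversal crossing of those two chords, which the interleaving forces inside $F$. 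Because $C$ bounds a face it separates $S$, so a Jordan-curve parity argument on $S$ rules out two closed curves meeting in exactly one point --- a contradiction. Hence every facial cycle of $\Pi$ is facial in $\Pi_0$; applying the same argument with the two (both polyhedral) embeddings interchanged gives the converse inclusion, the two sets of facial cycles coincide, Euler's formula then gives genus $0$, and the local half completes the proof.

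The step I expect to be the main obstacle is this last crossing argument: extracting the interleaved inside/outside paths cleanly from $3$-connectivity (and disposing of the small degenerate configurations), and above all arranging the two closed curves together with the separation bookkeeping on $S$ so that ``exactly one transversal intersection'' is genuinely impossible --- i.e., turning ``the facial cycle $C$ bounds a face, hence separates $S$'' into a usable parity statement. By contrast the local half is purely combinatorial, and the final genus count is just Euler's formula.
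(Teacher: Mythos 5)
The local half of your plan is sound (and the converse-inclusion step at the end, which cannot literally be done ``by interchanging the embeddings'' since a facial cycle of $\Pi_0$ need not separate $S$, does follow from the forward inclusion because every edge lies on exactly two faces in each embedding). The genuine gap is exactly where you feared it: the statement ``two closed curves in $S$ meeting in exactly one transversal point is impossible because $C$ separates $S$'' is false. On the torus two closed curves can meet in exactly one point, and placing that crossing inside a disk bounded by a third, separating curve $C$ changes nothing: your curves $\alpha=P\cup c_P$ and $\beta=Q\cup c_Q$ each cross $C$ exactly twice, so the mod~$2$ intersection pairing with the null-homologous class of $C$ is perfectly consistent. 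The parity argument kills a single crossing only when one of $\alpha,\beta$ is \emph{itself} null-homologous, which you have not shown --- and cannot show from this data, since $\alpha\cdot\beta=1$ would force both to be non-separating. What your configuration actually proves is only that $S$ has positive genus, which is not a contradiction but essentially a restatement of what must be excluded; the obstruction has to come from the graph (planarity, $3$-connectivity, polyhedrality), not from pure topology of $S$.

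The paper sidesteps this by running the Jordan-curve argument in the opposite direction, entirely inside the plane. It compares rotations directly: if the polyhedral embedding $G'$ differs from the plane embedding $G$ and its mirror image, then either some vertex $v$ has a rotation agreeing with neither (and one extracts two angles of $G'$ at $v$ that interleave in the planar rotation), or there is an edge whose two endpoints inherit opposite orientations. Either way one obtains two \emph{faces} of $G'$ whose underlying cycles, drawn in the plane embedding, are Jordan curves crossing at a vertex or along an edge; the honest planar Jordan curve theorem forces a second intersection, so these two faces of $G'$ meet in more than a single vertex or a single edge --- contradicting polyhedrality of $G'$ rather than any topological fact about $S$. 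If you want to keep your face-set-based architecture, the crossing step must be reorganized along these lines (or one must genuinely prove Thomassen's facewidth bound); as written, the decisive step of your global half does not go through.
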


\begin{proof}

  Let $G$ be a plane embedding of a 3-connected planar graph with mirror image $G^{-1}$ and let $G'$ be
  an embedding different from these two.
  We say that a vertex of $G'$ has type $1$ if the order is the same
  as in $G$, type $-1$ if it is the same as in $G^{-1}$ and type $2$ otherwise.
  As $G'$ is neither $G$ nor $G^{-1}$, $G'$ has a vertex of type $2$ or an edge
  with one vertex of type $1$ and one vertex of type $-1$.

  Assume first that there is a vertex $v$ of type $2$. Let $e_0,\dots ,e_{d-1}$ be the order of edges around $v$ in $G'$.
  If $\{e_0,e_1\}$ is no angle of $G$, we take this set of edges. Otherwise
  assume w.l.o.g.\ that $e_1=\nx(e_0)$ in $G$ and let $j$ be minimal so that
  in $G$ we have $\nx(e_j)\not=e_{(j+1)\Mod{d}}$.
  As in $G^{-1}$ we have $\nx(e_j)=e_{j-1}$, the edge
$e_{(j+1)\Mod{d}}$ follows $e_j$ neither in $G$ nor in $G'$, so
  $\{e_j,e_{(j+1)\Mod{d}}\}$ is no angle in $G$.
  W.l.o.g.\ assume $j=0$.

  So the order around $v$ in $G$ is $e_0,e_{i_1},\dots ,e_{i_j},e_1,e_{i_{j+1}},\dots ,e_{i_{d-2}}$
  with $1\le j<d-2$ and assume w.l.o.g.\ that $e_{d-1}\in \{e_{i_{j+1}},\dots ,e_{i_{d-2}}\}$.
  Let $y= \max \{i_1,\dots ,i_j\}$, so
  $y<d-1$ and $(y+1)\in \{i_{j+1},\dots ,i_{d-2}\}$, which implies that $\{e_y,e_{y+1}\}$ is an angle of $G'$ with
$e_y\in \{e_{i_1},\dots ,e_{i_j}\}$ and $e_{y+1}\in \{e_{i_{j+1}},\dots ,e_{i_{d-2}}\}$.
    Let $F$ be the facial cycle in $G'$ containing the
  angle $\{e_0,e_1\}$ and $F'$ be the facial cycle containing $\{e_y,e_{y+1}\}$. We have $F\not=F'$ as otherwise
  the faces would not be simple cycles. In $G$ these cycles are no facial cycles, but two Jordan curves
  crossing each other in $v$. Due to the Jordan curve theorem, there must be a second crossing,
  so $F,F'$ are two facial cycles that have at least two vertices in common that are
  no endpoints of a common edge -- a contradiction to $G'$ being polyhedral.

  Assume now that all vertices are of type $1$ or type $-1$. Then there is an edge $e_0$ with one vertex
  of type $1$ and one of type $-1$. Assume that in $G$ the orientation around the type $1$ vertex of $e_0$ is
  $e_0,e_1,\dots ,e_d$ and around the type $-1$ vertex it is $e_0^{-1},e'_1,\dots ,e'_{d'}$, so in $G'$ it is
  $e_0,e_1,\dots ,e_d$ resp. $e'_{d'}, e'_{d'-1},\dots ,e_0^{-1}$. In $G'$ there is a face $F$ containing
  $e_d^{-1},e_0,e'_{d'}$ and another face $F'$ containing ${e'_1}^{-1},e_0^{-1},e_1$. In $G$ the corresponding
  cycles are again no
  facial cycles but Jordan curves crossing each other (with one common edge), so like in the first case we
  get a contradiction from the fact that there must be a second intersection between $F$ and $F'$.

  \end{proof}

As plane embeddings of 3-connected graphs are all polyhedral, this also implies Whitney's theorem, but there are
also other consequences that are worth mentioning. They follow already from Theorem~8.1 in \cite{Thomassen_90}.
Note that for graphs with 1- or 2-cut there are no polyhedral embeddings in any surface.

\begin{corollary}
\begin{itemize}
\item  There are no polyhedral embeddings of planar graphs in any orientable surface but the plane.
\item  There are no embeddings of cubic planar graphs with a simple dual in any orientable surface but the plane.
\end{itemize}
\end{corollary}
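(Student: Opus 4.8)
The plan is to deduce both items directly from Theorem~\ref{thm}, using only two facts already recorded in the excerpt: that a graph with a $1$- or $2$-cut admits no polyhedral embedding in any orientable surface, and that a plane embedding of a $3$-connected planar graph is automatically polyhedral.

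For the first item I would argue by contradiction. Suppose the planar graph $G$ carries a polyhedral embedding $\Phi$ in some orientable surface $S$. Since a $1$- or $2$-cut precludes any polyhedral embedding, $G$ must be $3$-connected; being also planar, it has a plane embedding $G_0$, which by the remark above is itself polyhedral. Theorem~\ref{thm} now applies and says the polyhedral embedding of $G$ is unique up to equivalence, so the polyhedral embedding $\Phi$ must coincide with $G_0$ or with $G_0^{-1}$. As $G_0$ and $G_0^{-1}$ have the same underlying sets of facial edges, they have the same number of faces, hence by the Euler formula $\gamma(G_0)=\gamma(G_0^{-1})=0$; thus $\Phi$ has genus $0$ and $S$ is the plane, contradicting the hypothesis that $S$ is a surface other than the plane.

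The second item reduces to the first. If $G$ is a cubic planar graph and $\Phi$ an embedding of $G$ with simple dual, then by the definition of a polyhedral embedding --- which for cubic embedded graphs is precisely the condition that the dual be simple --- $\Phi$ is a polyhedral embedding of a planar graph, so the first item forces $\Phi$ to be plane.

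I do not expect a genuine obstacle here; the only thing that needs care is the bookkeeping around connectivity. One must explicitly invoke that a $1$- or $2$-cut rules out polyhedral embeddings, so that $G$ is $3$-connected and Theorem~\ref{thm} is applicable at all, and that a plane embedding of a $3$-connected planar graph is polyhedral, so that there really is a plane polyhedral embedding against which to compare $\Phi$. Everything else follows from the uniqueness statement and the definition of the genus via the Euler formula.
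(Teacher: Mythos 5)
Your proposal is correct and follows exactly the deduction the paper intends: use the remark that 1- or 2-cuts preclude polyhedral embeddings to reduce to the 3-connected case, invoke the theorem's uniqueness against the (automatically polyhedral) plane embedding, and conclude genus $0$ via the Euler formula, with the cubic case reducing to the first via the stated equivalence of ``polyhedral'' and ``simple dual.'' No gaps.
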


{\bf Acknowledgements:} I would like to thank Bojan Mohar for pointing me to the earlier uses of the crossing Jordan curves argument!

\bibliographystyle{plain}

\end{document}